\newtheorem{thm}{Theorem}[section]
\newtheorem{lem}[thm]{Lemma}
\theoremstyle{definition}
\theoremstyle{remark}
\numberwithin{equation}{section}
\begin{document}

\setcounter{page}{1}


\title[Classification of 3-GNDB graphs ]{Classification of 3-GNDB graphs}
\author[Hosseini, Alaeiyan and Aliannejadi]{A. Hosseini, M. Alaeiyan and Z. Aliannejadi$^{*}$}
\thanks{{\scriptsize
\hskip -0.4 true cm MSC(2010): Primary: 05C12; Secondary: 05C40,
11Y50
\newline Keywords: graphs, generalize 3-distance-balanced graphs, bipartite graphs.\\
$*$Corresponding author }}
\begin{abstract}
A nonempty graph $\Gamma$ is called generalized 3-distance-balanced, (3-$GDB$) whenever for every edge $ab$, $|W_{ab}|=3|W_{ba}|$ or conversely. As well as a graph $\Gamma$ is called generalized 3-nicely distance-balanced (3-$GNDB$) whenever for every edge $ab$ of $\Gamma$, there exists a positive integer $\gamma_\Gamma$, such that: $|W_{ba}|=\gamma_\Gamma$.\\
In this paper, we classify 3-$GNDB$ graphs with, $\gamma_\Gamma\in \{1,2\}$. 

\end{abstract}

\maketitle
\section{Introduction}
Throughout of this paper, let $\Gamma$ be a finite, undirected, connected graph with diameter $d$, and $V(\Gamma)$ and $E(\Gamma)$ denote the vertex and edge set of $\Gamma$, respectively. The distance $d(a,b)$ between vertices $a,b\in V(\Gamma)$ is the length of a shortest path between $a,b\in V(\Gamma)$. For an edge $ab$ of a graph $\Gamma$, let $W_{ab}$ be the set of vertices closer to $a$ than to $b$ , that is $W_{ab}=\{x\in\Gamma|d(x,a)<d(x, b)\}$. We call a graph $\Gamma$, distance-balanced $(DB)$, if $|W_{ab}|=|W_{ba}|$ for every edge $ab\in E(\Gamma)$. These graphs were studied by Handa \cite{6} who considered $DB$. For recent results on $DB$ and $EDB$ see \cite{3, 4, 5, 7, 8, 9, 11, 12}. A graph $\Gamma$ is called nicely distance-balanced, whenever there exists a positive integer $\gamma_\Gamma$, such that for two adjacent vertices $a,b$ of $\Gamma$; $|W_{ab}|=|W_{ba}|=\gamma_\Gamma$. These graphs were studied by Kutnur and Miklavi\v{c} in \cite{10}.\\
A graph $\Gamma$ is called generalized 3-distance-balanced (3-$GDB$) if for every
edge $ab\in E(\Gamma)$; $|W_{ab}|=3|W_{ba}|$ or conversely. Throughout of this
paper, we assume that $|W_{ab}|=3|W_{ba}|$. A graph $\Gamma$ is called generalized
3-nicely distance-balanced (3-$GNDB$), if for every edge $ab$ of $\Gamma$, there exists a positive integer $\gamma_\Gamma$, such that: $|W_{ba}|=\gamma_\Gamma$. For example we can show that $K_{1,3}$ and $K_{2,6}$ are $3-GNDB$. The aim of this paper is classifying 3-$GNDB$ graphs with $\gamma_\Gamma\in \{1,2\}$.

\section{Classification}

In order to express the problem, it is better to start with parameter 3. In this section, we classify 3-$GNDB$ graphs with $\gamma_\Gamma\in \{1,2\}$.\\
For every two non-negative integers $i,j$, we denote:
\begin{center}
$\hspace*{1.5cm}$ $D^i_j(a,b)=\{x\in V(\Gamma) |d(x,a)=i$ and $d(x,b)=j\}.$ $\hspace*{1.29cm}(1)$
\end{center}
We now suppose that $\Gamma$ is a 3-$GNDB$ graph with diameter $d$. Since $|W_{ab}|=3|W_{ba}|$ for every two adjacent vertices $a,b$ and by (1), we have
\begin{center}
$|\{a\}\bigcup^{d-1}_{i=1} D^i_{i+1}(a,b)|=3|\{b\}\bigcup^{d-1}_{i=1} D^{i+1}_i(a,b)|$.
\end{center}
Therefore,\\
$\hspace*{2cm}$ $\sum^{d-1}_{i=1}|D^i_{i+1}(a, b)|=3 \sum^{d-1}_{i=1}|D^{i+1}_i(a, b)|+2$.$\hspace*{2.5cm}(2)$

\begin{thm}\label{th2.1}
If $\Gamma$ be a connected $k-GNDB$ graph, then $\Gamma$ is a bipartite graph.

\begin{proof}. Inspired by the proof of Theorem 1.1 in \cite{2}, let $\Gamma$ be a $k-GNDB$ graph with diameter $d$, and the vertex set $\{v_1,v_2,...,v_{2l+1}\}$ form an odd cycle with length $2l + 1$ such that $v_iv_{i+1}\in E(\Gamma)$. Set
\begin{center}
$A_{ij}=\{v\in V(\Gamma)|d(v,v_{i+2l})=m_jk$,
\end{center}
\begin{center}
$m_jk=\{1,2,...,d\}, k=0,1,...,2l, 2\leqslant j\leqslant r\}$,
\end{center}
and
\begin{center}
$W^\Gamma_{v_i,v_{i+l}}=(\bigcup^r_{j=1} A_{ij})\bigcup\{v_i,v_{i+2l}\}$,
\end{center}
\begin{center}
$W^\Gamma_{v_{i+1},vi}=(\bigcup^r_{j=1} A_{(i+1)j})\bigcup\{v_{i+1},v_{i+2}\}$,
\end{center}
where the calculation in indexes $i$ are performed modulo $2l+1$ and some $r\in N$. Taking $|A_{ij}|=a_{ij}$ for $i=0,1,...,2l$ and $j=1,2,...,r$, by definition $k-GNDB$ graphs, there exists $e_i\in \{ \pm1\}, i=0,1,...,2l$ such that
\begin{center}
$\sum^r_{j=1} a_{0j}+2=k^{e_0}(\sum^r_{j=1} a_{1j}+2),$
\end{center}
\begin{center}
$\sum^r_{j=1} a_{1j}+2=k^{e_1}(\sum^r_{j=1} a_{2j}+2),$
\end{center}
$\hspace*{5.8cm}.$\\
$\hspace*{5.8cm}.$\\
$\hspace*{5.8cm}.$\\
\begin{center}
$\sum^r_{j=1} a_{(2l-1)j}+2=k^{e_{2l-1}}(\sum^r_{j=1} a_{2l}+2)$,
\end{center}
\begin{center}
$\sum^r_{j=1} a_{(2l)j}+2=k^{e_{2l}}(\sum^r_{j=1} a_{0j} + 2)$.
\end{center}
Now, multipling all $(2l+1)$ equations above imply that $k^{{\sum^{2i}_{i=0}} e_i}=1$,
that is, $\sum^{2i}_{i=0} ei=0$. On the other hand,
$e_i\in \{ \pm1\} \Longrightarrow 1 \leqslant |\sum^{2i}_i=0 e_i|$,
which is a contradiction and henes $\Gamma$ has no odd cycle. This completes
the proof. 
\end{proof} 
\end{thm}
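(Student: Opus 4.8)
The plan is a proof by contradiction, in the spirit of the argument of \cite{2}: suppose $\Gamma$ is a connected $k$-GNDB graph with $k\geq 2$ that is \emph{not} bipartite, so that it contains a cycle of odd length, and extract from that cycle a numerical identity which parity forbids. (For $k=1$ the statement is false: every complete graph $K_n$ is $1$-GNDB with $\gamma_\Gamma=1$ yet is not bipartite when $n\geq 3$; so one reads the hypothesis with $k\geq 2$, the range relevant here.) The organising idea is to let the $k$-GNDB hypothesis enter through a single ``column sum'', so that no minimality or chordlessness of the odd cycle is needed.

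Concretely, I would fix an odd cycle $v_0v_1\cdots v_{2l}v_0$ in $\Gamma$, with indices read modulo $2l+1$. For $x\in V(\Gamma)$ and an index $i$ put $\phi_i(x)=d(x,v_{i+1})-d(x,v_i)$. Two elementary observations drive the argument: since $v_iv_{i+1}\in E(\Gamma)$, the triangle inequality gives $\phi_i(x)\in\{-1,0,1\}$; and straight from the definition of $W_{ab}$, $\phi_i(x)=1$ precisely when $x\in W_{v_iv_{i+1}}$ and $\phi_i(x)=-1$ precisely when $x\in W_{v_{i+1}v_i}$. Summing over all vertices,
\[
\sum_{x\in V(\Gamma)}\phi_i(x)=|W_{v_iv_{i+1}}|-|W_{v_{i+1}v_i}|\qquad\text{for every }i .
\]

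Now I would invoke the hypothesis. Because $\Gamma$ is $k$-GNDB, for each edge the sets $W_{v_iv_{i+1}}$ and $W_{v_{i+1}v_i}$ have sizes $k\gamma_\Gamma$ and $\gamma_\Gamma$ in one order or the other, so the sum above equals $\delta_i(k-1)\gamma_\Gamma$ for some $\delta_i\in\{+1,-1\}$. On the other hand, for each fixed $x$ the sum $\sum_{i=0}^{2l}\phi_i(x)$ telescopes around the cycle to $0$. Swapping the order of summation,
\[
0=\sum_{x\in V(\Gamma)}\;\sum_{i=0}^{2l}\phi_i(x)=\sum_{i=0}^{2l}\;\sum_{x\in V(\Gamma)}\phi_i(x)=(k-1)\,\gamma_\Gamma\,\sum_{i=0}^{2l}\delta_i .
\]
As $k\geq 2$ and $\gamma_\Gamma\geq 1$, this forces $\sum_{i=0}^{2l}\delta_i=0$, which is impossible: it is a sum of the odd number $2l+1$ of terms, each $\pm1$. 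Hence $\Gamma$ has no odd cycle, i.e. it is bipartite.

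The step I expect to be the main obstacle is the identity $\sum_{x}\phi_i(x)=\delta_i(k-1)\gamma_\Gamma$: this is exactly where one needs \emph{nicely} distance-balanced and not merely distance-balanced. With only the $k$-GDB property the $i$-th difference would be known only to have the form $\pm(k-1)m_i$ for possibly distinct positive integers $m_i$, and an expression $\sum_i\pm(k-1)m_i$ can easily vanish, so the parity obstruction would collapse; it is the common value $\gamma_\Gamma$ supplied by the $k$-GNDB hypothesis that converts the double sum into a nonzero multiple of $\sum_i\delta_i$. The remaining ingredients --- the telescoping, the triangle-inequality bound, the odd-length count --- are routine.
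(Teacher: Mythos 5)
Your proof is correct, and it reaches the conclusion by a genuinely different mechanism than the paper. The paper fixes an odd cycle $v_0v_1\cdots v_{2l}$ and works \emph{multiplicatively}: it writes a chain of ratio equations $Q_i=k^{e_i}Q_{i+1}$ with $e_i\in\{\pm1\}$ for certain cardinalities $Q_i$ attached to consecutive edges, multiplies all $2l+1$ of them so the $Q_i$ cancel, and concludes $k^{\sum e_i}=1$, hence $\sum_{i=0}^{2l}e_i=0$, impossible for an odd number of $\pm1$'s. You work \emph{additively}: you double-count $\sum_i\sum_x\bigl(d(x,v_{i+1})-d(x,v_i)\bigr)$, telescoping to $0$ for each fixed $x$ and evaluating each inner sum as $|W_{v_iv_{i+1}}|-|W_{v_{i+1}v_i}|=\pm(k-1)\gamma_\Gamma$ via the GNDB hypothesis, which again forces an odd number of $\pm1$'s to sum to zero. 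Both routes end at the same parity obstruction, but yours buys two things. First, it is airtight where the paper's chain is fragile: the paper's consecutive quantities $Q_i$ and $Q_{i+1}$ are $W$-set sizes of \emph{different} edges, both lying in $\{\gamma_\Gamma,k\gamma_\Gamma\}$, so their ratio can be $1$ and the claimed $e_i\in\{\pm1\}$ is not justified as written; your $\delta_i$ compares the two sides of a \emph{single} edge, where the ratio is genuinely $k^{\pm1}$. Second, you correctly flag that the statement needs $k\geq2$ (e.g.\ $K_3$ is $1$-GNDB but not bipartite); the paper's step $k^{\sum e_i}=1\Rightarrow\sum e_i=0$ silently assumes the same. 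Your observation that only the ``nicely'' hypothesis makes the column sums a common multiple of $\sum_i\delta_i$ is accurate, though one can note that the weaker $k$-GDB property already gives each column sum the form $\pm(k-1)m_i$ with $m_i>0$, and a signed sum of an odd number of \emph{positive} terms weighted by $(k-1)$ need not vanish only because the $m_i$ may differ --- so your caution about where the argument would break is well placed.
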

\begin{thm}\label{th2.2}
If $\Gamma$ be a 3-GNDB grpah with $d=2$, then $deg(a)=3 deg(b)$ for every edge $ab$ of $\Gamma$. 
\end{thm}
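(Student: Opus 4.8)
The plan is to combine Theorem~\ref{th2.1} with the diameter hypothesis to pin down the distance structure around a single edge, and then substitute that information into equation (2). First, since a $3$-$GNDB$ graph is in particular a $k$-$GNDB$ graph, Theorem~\ref{th2.1} says $\Gamma$ is bipartite, so it has no odd cycle; in particular $\Gamma$ is triangle-free. Consequently, for any edge $ab$ the vertices $a$ and $b$ have no common neighbour, which in the notation of (1) means $D^1_1(a,b)=\emptyset$.

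Next I would express $\deg(a)$ and $\deg(b)$ through the sets $D^i_j(a,b)$. Fix an edge $ab$. Any neighbour $x$ of $a$ with $x\neq b$ satisfies $1\le d(x,b)\le d(x,a)+d(a,b)=2$, and $d(x,b)=1$ is impossible by triangle-freeness, so $d(x,b)=2$; conversely every vertex of $D^1_2(a,b)$ is a neighbour of $a$ not equal to $b$. Hence $N(a)=\{b\}\cup D^1_2(a,b)$ is a disjoint union, so $\deg(a)=1+|D^1_2(a,b)|$, and symmetrically $\deg(b)=1+|D^2_1(a,b)|$. This is the step where the hypothesis $d=2$ is essential: it forbids a neighbour of $a$ from lying at distance greater than $2$ from $b$, so the only distance classes that occur among the neighbours are $\{b\}$ and $D^1_2(a,b)$ (respectively $\{a\}$ and $D^2_1(a,b)$).

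Finally I would specialise equation (2) to $d=2$: each sum collapses to its single $i=1$ term, giving $|D^1_2(a,b)|=3|D^2_1(a,b)|+2$. Substituting $|D^1_2(a,b)|=\deg(a)-1$ and $|D^2_1(a,b)|=\deg(b)-1$ yields $\deg(a)-1=3(\deg(b)-1)+2$, i.e.\ $\deg(a)=3\deg(b)$, as claimed.

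As for difficulty, there is essentially no hard step here; the whole content is the bookkeeping identifying $|D^1_2(a,b)|$ with $\deg(a)-1$ and $|D^2_1(a,b)|$ with $\deg(b)-1$. The only point that genuinely needs care — and that uses \emph{both} standing hypotheses — is excluding the degenerate classes: bipartiteness kills $D^1_1(a,b)$, and $d=2$ kills neighbours at distance $>2$. Once those are ruled out, equation (2) does all the work. As a sanity check one may instead note that a bipartite graph of diameter $2$ is a complete bipartite graph $K_{m,n}$, and compute directly $|W_{ab}|=n=\deg(a)$ and $|W_{ba}|=m=\deg(b)$, so the $3$-$GDB$ relation again gives $\deg(a)=3\deg(b)$; this also recovers the examples $K_{1,3}$ and $K_{2,6}$.
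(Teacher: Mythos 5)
Your proof is correct and follows essentially the same route as the paper: specialise equation (2) to $d=2$ to get $|D^1_2(a,b)|=3|D^2_1(a,b)|+2$ and identify $|D^1_2(a,b)|=\deg(a)-1$, $|D^2_1(a,b)|=\deg(b)-1$. In fact you are slightly more careful than the paper, which asserts $\deg(b)=t+1$ without noting that this needs $D^1_1(a,b)=\emptyset$, a point you correctly justify via bipartiteness (Theorem~\ref{th2.1}).
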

\begin{proof}
It follows from (1) that for a $3-GNDB$ graph with diameter $2$, $|D^1_2(a,b)|=3|D^2_1(a,b)|+2$, for every edge $ab$ of $\Gamma$. If $|D^2_1(a,b)|=t$, then $|D^1_2(a,b)|=3t+2.$ Therefore, $deg(b)=t+1$ and $deg(a)=3t+3$. So always $deg(a)=3 deg(b)$. 
\end{proof}
\begin{lem}\label{lem2.3} Let $\Gamma$ be a $3-GNDB$ graph with diameter 2. Then $\Gamma$ is only $K_{n,3n}$. 
\end{lem}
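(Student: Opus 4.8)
The plan is to combine the two preceding results with a single structural observation about bipartite graphs of diameter $2$. First I would invoke Theorem~\ref{th2.1} (with $k=3$) to record that $\Gamma$ is bipartite, and write $V(\Gamma)=X\cup Y$ for its bipartition; note that both $X$ and $Y$ are nonempty and $|V(\Gamma)|\ge 3$ since $\operatorname{diam}(\Gamma)=2$.

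The key step is to show that a connected bipartite graph of diameter $2$ is complete bipartite. Here I would use the standard fact that in a bipartite graph the distance between two vertices is odd precisely when they lie in different parts. Hence for $x\in X$ and $y\in Y$ the distance $d(x,y)$ is odd and satisfies $1\le d(x,y)\le 2$, which forces $d(x,y)=1$. Thus every vertex of $X$ is adjacent to every vertex of $Y$, i.e.\ $\Gamma\cong K_{|X|,|Y|}$.

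It then remains to pin down $|X|$ and $|Y|$. Pick any edge $ab$ of $\Gamma$; by Theorem~\ref{th2.2} we have $\deg(a)=3\deg(b)$. In $K_{|X|,|Y|}$ the degree of a vertex equals the cardinality of the opposite part, so (after possibly interchanging the roles of $X$ and $Y$) we obtain $|Y|=3|X|$. Setting $n=|X|$ gives $\Gamma\cong K_{n,3n}$ with $n\ge 1$. For completeness one can also check the converse consistency: for $n\ge 1$ one has $3n\ge 3$, so $K_{n,3n}$ genuinely has diameter $2$, and a direct computation of $W_{ab}$ and $W_{ba}$ on any edge of $K_{n,3n}$ shows $|W_{ba}|=n$ and $|W_{ab}|=3n$, so $K_{n,3n}$ is indeed $3$-$GNDB$ with $\gamma_\Gamma=n$.

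I expect essentially no serious obstacle here: the content is carried entirely by Theorems~\ref{th2.1} and~\ref{th2.2}, and the only genuine idea is the elementary observation that bipartite plus diameter $2$ equals complete bipartite. The one point requiring a little care is the final step, where one must first note that $|X|\ne|Y|$ — otherwise $\Gamma$ would be regular and the relation $\deg(a)=3\deg(b)$ could not hold — so that the identification $\{|X|,|Y|\}=\{n,3n\}$ is forced rather than merely one possibility among several.
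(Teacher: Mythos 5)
Your proof is correct and follows essentially the same route as the paper: bipartiteness (Theorem~\ref{th2.1}) plus diameter $2$ forces complete bipartiteness, and Theorem~\ref{th2.2} then pins down the part sizes as $n$ and $3n$. In fact you supply the one detail the paper leaves implicit (the parity argument showing that a connected bipartite graph of diameter $2$ must be complete bipartite, where the paper only asserts ``otherwise it does not have diameter 2''), and your converse check that $K_{n,3n}$ really is $3$-$GNDB$ is a worthwhile addition.
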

\begin{proof}
Let $\Gamma$ be a $3-GNDB$ graph with diameter 2. We claim that $\Gamma$ is a complete bipartite graph. Otherwise, it does not have diameter 2. It follows from Theorem 2.2 that $deg(a)=3deg(b)$. Since $\Gamma$ is complete bipartite graph, $\Gamma$ must be $K_{n,3n}$. 
\end{proof}

\begin{lem}\label{lem2.4}
Let $\Gamma$ be a connected $k-GNDB$ graph with diameter $d$. Then $d \leqslant k \gamma_\Gamma$ 
\end{lem}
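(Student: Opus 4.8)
The plan is to exhibit a diametral shortest path and count how many of its vertices are forced to lie inside a single set $W_{x_1x_0}$. Concretely, I would first pick two vertices $u,v$ with $d(u,v)=d$ and fix a shortest path $u=x_0,x_1,\dots,x_d=v$ with $x_{i-1}x_i\in E(\Gamma)$ for $1\le i\le d$.

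Next I would record the elementary fact about shortest paths that drives the estimate: for each index $j\ge 1$ one has $d(x_j,x_1)=j-1<j=d(x_j,x_0)$, so $x_j$ is strictly closer to $x_1$ than to $x_0$, i.e. $x_j\in W_{x_1x_0}$. Hence the $d$ pairwise distinct vertices $x_1,x_2,\dots,x_d$ all belong to $W_{x_1x_0}$, which gives $d\le |W_{x_1x_0}|$.

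Finally I would invoke the $k$-GNDB hypothesis on the edge $x_0x_1$. With the standing convention $|W_{ab}|=k|W_{ba}|$ together with $|W_{ba}|=\gamma_\Gamma$, every quantity of the form $|W_{xy}|$ for an edge $xy$ equals either $\gamma_\Gamma$ or $k\gamma_\Gamma$, so in particular $|W_{x_1x_0}|\le k\gamma_\Gamma$. Combining this with $d\le|W_{x_1x_0}|$ yields $d\le k\gamma_\Gamma$, as claimed.

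There is no substantial obstacle here; the only point requiring a line of care is to bound $|W_{x_1x_0}|$ rather than $|W_{x_0x_1}|$ by $k\gamma_\Gamma$ — both are at most $k\gamma_\Gamma$, but it is the set on the $x_1$-side into which the tail $x_1,\dots,x_d$ of the shortest path falls. One could equally run the argument from any edge $x_{i-1}x_i$ of the path, obtaining $d-i+1\le|W_{x_ix_{i-1}}|\le k\gamma_\Gamma$; the case $i=1$ already gives the stated inequality.
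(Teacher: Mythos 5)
Your proposal is correct and follows essentially the same route as the paper: fix a diametral shortest path $x_0,x_1,\dots,x_d$, observe that $x_1,\dots,x_d$ all lie in $W_{x_1x_0}$, and bound $|W_{x_1x_0}|$ by $k\gamma_\Gamma$ using the $k$-GNDB condition on the edge $x_0x_1$. Your remark that $|W_{x_1x_0}|\le k\gamma_\Gamma$ holds in either orientation is a small but welcome tightening of the paper's ``without loss of generality'' step.
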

\begin{proof}
Pick vertices $x_0$ and $x_d$ of $\Gamma$ such that $d(x_0, x_d)=d$ and a shortest path $x_0,x_1,x_2,...,x_d$ between $x_0$ and $x_d$. We may assume without loss of generality that $|W_{x_0,x_1}|=k|W_{x_0,x_1}|$. Then $\{x_1,x_2,...,x_d\}\in W_{x_1,x_0}$. Hence $|\{x_1,x_2,...,x_d\}|\leqslant |W_{x_1,x_0}|=k|W_{x_0,x_1}|$. this shows
that $d\leqslant k\gamma_\Gamma$.

\end{proof}

We now classify $3-GNDB$ graphs $\Gamma$ with, $\gamma_\Gamma \in \{1,2\}$.\\ 
First we consider when $\gamma_\Gamma=1$. By the Lemma 2.4, $d \leqslant 3$.\\
If $d=1$, then $\Gamma$ is complete graph.\\
If $d=2$, by the Lemma 2.3, $\Gamma$ is only $K_{1,3}$.\\
If $d=3$, then we have only a path of length 2, that it would not be $3-GNDB$.\\

Now we consider the case $\gamma_\Gamma=2$.
\begin{thm}\label{th2.5}
A graph $\Gamma$ is $3-GNDB$ with $\gamma_\Gamma=2$ if and only if it is $K_{2,6}$. 
\end{thm}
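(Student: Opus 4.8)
The plan is to prove the equivalence in two directions, leveraging the structural results already established. For the ``if'' direction, I would simply verify directly that $K_{2,6}$ is $3$-$GNDB$ with $\gamma_\Gamma = 2$: for any edge $ab$ with $a$ in the part of size $6$ and $b$ in the part of size $2$, one computes $W_{ba} = \{b\}$ together with the one other vertex in the part of size $2$ (which is at distance $2$ from $a$ but distance $2$... wait, distance $2$ from $b$ as well), so care is needed here --- actually in $K_{2,6}$ the other vertex $b'$ in the small part satisfies $d(b',a)=1$ and $d(b',b)=2$, hence $b' \in W_{ab}$, not $W_{ba}$; meanwhile $W_{ba}$ consists of $b$ and all vertices strictly closer to $b$, and since every vertex of the large part other than $a$ is at distance $1$ from both $a$ and $b$... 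I would carefully tabulate $D^i_j(a,b)$ and confirm $|W_{ba}| = 2$ and $|W_{ab}| = 6 = 3\cdot 2$, so the constant is $\gamma_\Gamma = 2$.

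For the ``only if'' direction, I would invoke Lemma~\ref{lem2.4} with $k=3$ and $\gamma_\Gamma = 2$ to conclude $d \leq 6$, and then argue that in fact the diameter must be small. The cleanest route is to split on the diameter $d$. If $d = 1$ the graph is complete, and a complete graph $K_n$ is not $3$-$GNDB$ unless trivial (one checks $|W_{ab}| = |W_{ba}| = 1$, forcing $1 = 3$), so $d=1$ is impossible for $\gamma_\Gamma = 2$. If $d = 2$, Lemma~\ref{lem2.3} forces $\Gamma = K_{n,3n}$, and then I would compute $|W_{ba}|$ explicitly for $K_{n,3n}$ in terms of $n$: taking $b$ in the part of size $n$, every other vertex of that part lies in $D^2_2(a,b)$... no, $d(b',b) = 2$ and $d(b',a) = 1$ so $b' \in W_{ab}$; thus $W_{ba} = \{b\}$ and $\gamma_\Gamma = 1$, which contradicts $\gamma_\Gamma = 2$ --- wait, that shows \emph{no} $K_{n,3n}$ with $n\geq 1$ gives $\gamma_\Gamma = 2$, so actually $d = 2$ is also impossible. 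Let me reconsider: perhaps the intended reading is that the part containing $b$ behaves differently. I would need to recompute: in $K_{n,m}$ with edge $ab$, $W_{ba}$ = $\{b\}\cup(\text{vertices closer to }b)$; a vertex $b'$ in $b$'s part has $d(b',a) = 1 < 2 = d(b',b)$, so it is \emph{not} in $W_{ba}$; a vertex $a'$ in $a$'s part has $d(a',b) = 1 < 2 = d(a',a)$... so it \emph{is} in $W_{ba}$. Hence $W_{ba} = \{b\}\cup(\text{part of }a \setminus\{a\})$, giving $|W_{ba}| = m-1+1 = m$ when $a$ is in the size-$m$ part... This recomputation shows I must be very careful with which side is which; the honest statement is $|W_{ba}|$ equals (size of $a$'s part), and the balance condition $|W_{ab}| = 3|W_{ba}|$ then pins down the two part sizes.

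Carrying this through: writing $\Gamma = K_{n,3n}$, for an edge $ab$ with $a$ in the part of size $3n$, we get $|W_{ba}| = 3n$ and $|W_{ab}| = n$; for the balance $|W_{ab}| = 3|W_{ba}|$ we would need $n = 9n$, impossible, so orientation matters and we take $a$ in the size-$n$ part, giving $|W_{ba}| = n$ and $|W_{ab}| = 3n$, consistent, with $\gamma_\Gamma = |W_{ba}| = n$. Then $\gamma_\Gamma = 2$ forces $n = 2$, i.e. $\Gamma = K_{2,6}$. This handles $d = 2$ completely. Finally I would rule out $3 \leq d \leq 6$: by Theorem~\ref{th2.1} the graph is bipartite, and I would use equation~(2) together with $\gamma_\Gamma = 2$ (so $|W_{ba}| = 2$, meaning $\sum_{i=1}^{d-1}|D^{i+1}_i(a,b)| = 1$ and $\sum_{i=1}^{d-1}|D^i_{i+1}(a,b)| = 5$) to derive a contradiction with connectivity and the existence of a geodesic of length $d \geq 3$, since along such a geodesic $x_0,\dots,x_d$ the intermediate vertices $x_2,\dots,x_d$ all lie in $W_{x_1 x_0}$, forcing $|W_{x_1 x_0}| \geq d - 1 \geq 2$ with equality only in degenerate path-like configurations that fail to be $3$-$GNDB$ (a path is not even $3$-$GDB$). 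The main obstacle I anticipate is the $d = 3$ case: here $W_{ba}$ has exactly $2$ vertices yet the diameter is $3$, which is a tight squeeze, and I expect one must show the graph is forced to be a short path or near-path and then check directly that equation~(2) cannot hold; making this ``forced structure'' argument rigorous, rather than hand-waving as the $\gamma_\Gamma = 1$ discussion does, will be the delicate part.
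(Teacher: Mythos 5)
Your ``if'' direction and your treatment of $d\le 2$ are fine and essentially match the paper: Lemma~\ref{lem2.4} gives $d\le 6$; a complete graph cannot satisfy $|W_{ab}|=3|W_{ba}|$ since both sets are singletons; and for $d=2$ Lemma~\ref{lem2.3} gives $K_{n,3n}$, where your (eventually corrected) computation $\gamma_\Gamma=|W_{ba}|=n$ forces $n=2$. The genuine gap is the case $3\le d\le 6$, which is where essentially all of the work in this theorem lies, and your proposal does not dispose of it. The geodesic mechanism you offer cannot produce a contradiction: along a geodesic $x_0,\dots,x_d$ one has $x_1,\dots,x_d\in W_{x_1x_0}$, hence $|W_{x_1x_0}|\ge d$ (not $d-1$; you forgot $x_1$ itself), but the edge $x_0x_1$ is perfectly entitled to have $|W_{x_1x_0}|=3\gamma_\Gamma=6$, and $6\ge d$ holds for every $d\le 6$. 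So nothing is ``forced to be path-like,'' and no contradiction with equation (2) appears. You acknowledge this yourself by deferring the ``forced structure'' argument as the delicate part --- but that deferred part \emph{is} the proof.

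For comparison, the paper closes this gap by brute force. Bipartiteness (Theorem~\ref{th2.1}) makes every vertex lie in exactly one of $W_{xy}$, $W_{yx}$ for any edge $xy$, so $|V(\Gamma)|=3\gamma_\Gamma+\gamma_\Gamma=8$. The unique vertex of $W_{yx}\setminus\{y\}$ must lie in $D^2_1(x,y)$, since a vertex in $D^{j+1}_j(x,y)$ with $j\ge 2$ would force nonempty sets $D^j_{j-1},\dots,D^2_1$ along a geodesic to $y$, exceeding the count of one. The remaining five vertices are then distributed among the nonempty sets $D^1_2,\dots,D^{d-1}_d$, and the paper enumerates all such distributions for $d=3,4,5,6$ (fifteen-odd subcases), exhibiting in each an inconsistent edge. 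Some finite exhaustion of this kind appears unavoidable; your proposal needs either to carry it out or to replace it with an actual counting argument, and at present it does neither.
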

\begin{proof}
For adjacent vertices $a,b$ of $\Gamma$, we say that the edge ab is consistent if $|W_{ab}|=3|W_{ba}|$. Let $d$ be the diameter of $\Gamma$. By the Lemma 2.4, $d \leqslant 6$.
If $d=1$, then $\Gamma$ is a complete graph. Therefore, $d\in \{2,3,4,5,6\}$. Pick an edge $xy\in E(\Gamma)$ and for non-negative integers $i,j$ set $D^i_j=D^i_j(x, y)$. Note that, by triangle inequality, $D^i_j=\phi$ whenever $|i-j|>1$. If $d=2$, then by Lemma 2.3, $\Gamma$ is only $K_{2,6}$.\\
Note that $|V(\Gamma)|=8$. Consider that $xy\in E(\Gamma)$ and $d\in \{3,4,5,6\}$. Therefore, $|V(\Gamma)|$ $\setminus$ $\{x,y\}=6$. Since for every $D^i_j$, in which $i,j\neq0$, then there must be at least a neighbour for either vertex $x$ or vertex $y$. Suppose that $|D^2_1|=1$ for all cases. We now consider all different cases of $|D^i_j|$, where $i,j\neq0$ for the 6 remaining vertices in $\Gamma$ and edge $xy$.
Now we show that, there is no graph for $3\leqslant d\leqslant 6$.\\ 
If $d=3$, then we split our proof into the following subcases.\\ 

\textbf{Subcase 1:} $|D^1_2|=1, |D^2_3|=4$ and $|D^2_1|=1$.\\ 
We will show that this case can not occur. Denote the vertex in $D^1_2$ by $x_1$, the vertices in $D^2_3$ by $x_2, x_3, x_4$ and $x_5$, and also the vertex in $D^2_1$ by $y_1$. The vertices $x_2$ up to $x_5$ can not be adjacent with $y_1$, because which is created an odd cycle. Therefore, the vertices $x_2$ up to $x_5$ must be adjacent to each other. In this case we have an odd cycle.\\ 

\textbf{Subcase 2:} $|D^1_2|=2, |D^2_3|=3$ and $|D^2_1|=1$.\\ 
Denote the vertices in $D^1_2$ by $x_1$ and $x_2$, the vertices in $D^2_3$ by $x_3, x_4$ and $x_5$, and also the vertex in $D^2_1$ by $y_1$. The vertices $x_3, x_4$ and $x_5$ can not be adjacent with $y_1$. The vertices $x_3, x_4$ and $x_5$ can only be adjacent with $x_2$. Since the diameter of graph is 3, the vertex $y_1$ must be adjacent with $x_1$ or $x_2$ or both. In each case, the edges $xx_1$ or $yy_1$ are not consistent. 
\begin{figure}[H] 
\includegraphics[scale=0.9]{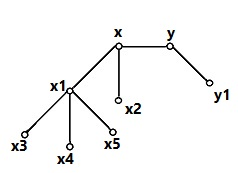}
\centering
\label{fig:1} 
\end{figure}
\textbf{Subcase 3:} $|D^1_2|=2, |D^2_3|=3$ and $|D^2_1|=1$.\\ 
Denote the vertices in $D^1_2$ by $x_1$ and $x_2$, the vertices in $D^2_3$ by $x_3, x_4$ and $x_5$, and also the vertex in $D^2_1$ by $y_1$. The vertices $x_3, x_4$ and $x_5$ can not be adjacent with $y_1$. The vertices $x_3, x_4$ and $y_1$ can not be adjacent with $x_5$. The vertices $x_3$ and $x_4$ must be adjacent with $x_2$, and vertex $x_5$ must be adjacent with $x_1$. The vertex $y_1$ can be adjacent with $x_1$ or $x_2$ or both. In each case the edge $xx_1$ is not consistent.
\begin{figure}[H] 
\includegraphics[scale=0.9]{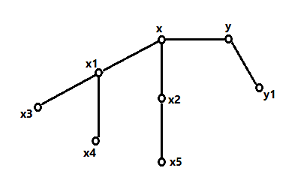}
\centering
\label{fig:1} 
\end{figure} 
\textbf{Subcase 4:} $|D^1_2|=3, |D^2_3|=2, |D^2_1|=1$.\\ 
Denote the vertices in $D^1_2$ by $x_1, x_2$ and $x_3$, the vertices in $D^2_3$ by $x_4$ and $x_5$, and also the vertex in $D^2_1$ by $y_1$. The vertex $y_1$ can not be adjacent with $x_4$ and $x_5$. The vertex $x_3$ can be adjacent with $x_4, x_5$ and $y_1$, and also The vertex $x_4$ can be adjacent with $x_2$ and $x_3$. Since the diameter of graph is 3, the vertices $y_1$ and $x_4$ must be adjacent with $x_2$ or the vertices $y_1$ and $x_5$ must be adjacent with $x_1$ or the vertex $y_1$ must be adjacent with $x_1$ and $x_2$. In each case the edges $xx_2$ and $x_2x_5$ are not consistent.
\begin{figure}[H] 
\includegraphics[scale=0.9]{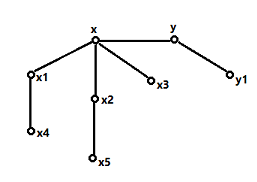}
\centering
\label{fig:1} 
\end{figure}
\textbf{Subcase 5:} $|D^1_2|=3, |D^2_3|=2, |D^2_1|=1$.\\ 
Denote the vertices in $D^1_2$ by $x_1, x_2$ and $x_3$, the vertices in $D^2_3$ by $x_4$ and $x_5$, and also the vertex in $D^2_1$ by $y_1$. The vertex $y_1$ can not be adjacent with $x_4$ and $x_5$. The vertices $x_4$ and $x_5$ can only be adjacent with $x_2$ and $x_3$. The vertices $x_2$ and $x_3$ can be adjacent with $x_4, x_5$ and $y_1$. Since the diameter of graph is 3, the vertices $y_1$ must be adjacent with $x_1$. In each case the edges $xx_1$ and $yy_1$ are not consistent.
\begin{figure}[H] 
\includegraphics[scale=0.9]{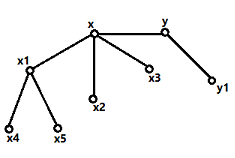}
\centering
\label{fig:1} 
\end{figure}
\textbf{Subcase 6:} $|D^1_2|=4, |D^2_3|=1, |D^2_1|=1$.\\
Denote the vertices in $D^1_2$ by $x_1, x_2, x_3$ and $x_4$, and the vertex in $D^2_3$ and $D^2_1$ by $x_5$ and $y_1$ respectively. The vertex $y_1$ can not be adjacent with $x_5$. The vertex $y_1$ can be adjacent with $x_1, x_2, x_3$ and $x_4$, and also the vertex $x_5$ can be adjacent with $x_1, x_2$ and $x_3$. In each case the edges $x_4x_5$ and $yy_1$ are not consistent.\\ 
If $d=4$ we split our proof into the following subcases.\\  

\textbf{Subcase 1:} $|D^1_2|=1, |D^2_3|=2, |D^3_4|=2, |D^2_1|=1$.\\
Denote the vertex in $D^1_2$ by $x_1$, the vertices in $D^2_3$ by $x_2$ and $x_3$, the vertices in $D^3_4$ by$x_4$ and $x_5$, and the vertex in $D^2_1$ by $y_1$. The vertex $y_1$ can not be adjacent with $x_2$ and $x_3$. The vertex $y_1$ can be adjacent with $x_1, x_4$ and $x_5$. The vertex $x_2$ can be adjacent with $x_4$ and $x_5$, and also the vertices $x_4$ and $x_5$ can only be adjacent with $x_2$ and $y_1$. In each case the edge $x_1x_3$ is not consistent.
\begin{figure}[H] 
\includegraphics[scale=0.9]{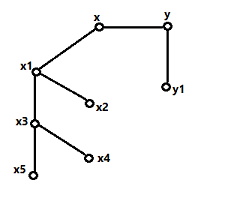}
\centering
\label{fig:1} 
\end{figure} 
\textbf{Subcase 2:} $|D^1_2|=1, |D^2_3|=2, |D^3_4|=2, |D^2_1|=1$.\\
Denote the vertex in $D^1_2$ by $x_1$, the vertices in $D^2_3$ by $x_2$ and $x_3$, the vertices in $D^3_4$ by $x_4$ and $x_5$, and the vertex in $D^2_1$ by $y_1$. The vertex $y_1$ can not be adjacent with $x_2$ and $x_3$. The vertex $y_1$ can be adjacent with $x_1, x_4$ and $x_5$. The vertex $x_4$ can be adjacent with $x_3$ and $y_1$, and also the vertex $x_5$ can be adjacent with $x_2$ and $y_1$. In each case the edge $x_2x_4$ is not consistent.
\begin{figure}[H] 
\includegraphics[scale=0.9]{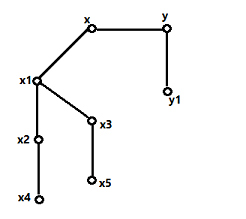}
\centering
\label{fig:1} 
\end{figure}
\textbf{Subcase 3:} $|D^1_2|=1, |D^2_3|=1, |D^3_4|=3, |D^2_1|=1.$\\ 
Denote the vertex in $D^1_2$ by $x_1$, the vertex in $D^2_3$ by $x_2$ and $x_3$, the vertices in $D^3_4$ by $x_3, x_4$ and $x_5$, and the vertex in $D^2_1$ by $y_1$. The vertex $y_1$ can not be adjacent with $x_2$. The vertices $x_3, x_4$ and $x_5$ must be adjacent with $y_1$. The vertex $y_1$ can be adjacent with $x_1, x_3,x_4$ and $x_5$. In each case the edge $x_1x_2$ is not consistent.\\ 

\textbf{Subcase 4:} $|D^1_2|=1, |D^2_3|=3, |D^3_4|=1, |D^2_1|=1.$\\  
Denote the vertex in $D^1_2$ by $x_1$, the vertices in $D^2_3$ by $x_2$, $x_3$ and $x_4$, the vertex in $D^3_4$ by $x_5$, and the vertex in $D^2_1$ by $y_1$. The vertex $y_1$ can not be adjacent with $x_2, x_3$ and $x_4$. The vertices $x_3$ and $x_4$ most be adjacent with $x_5$. The vertex $x_5$ can be adjacent with $x_3, x_4$ and $y_1$, and also the vertex $y_1$ can be adjacent with $x_1$ and $x_5$. In each case the edge $x_4x_5$ is not consistent.\\  

\textbf{Subcase 5:} $|D^1_2|=2, |D^2_3|=1, |D^3_4|=2, |D^2_1|=1$.\\
Denote the vertices in $D^1_2$ by $x_1$ and $x_2$, the vertex in $D^2_3$ by $x_3$, the vertices in $D^3_4$ by $x_4$ and $x_5$, and the vertex in $D^2_1$ by $y_1$. The vertex $y_1$ can not be adjacent with $x_3$. The vertices $x_4$ and $x_5$ must be adjacent with $y_1$. The vertex $y_1$ can be adjacent with $x_2, x_3, x_4$ and $x_5$. The vertex $x_2$ can only be adjacent with $x_3$ and $y_1$. In each case the edge $x_1x_3$ is not consistent.\\   

\textbf{Subcase 6:} $|D^1_2|=2, |D^2_3|=2, |D^3_4|=1, |D^2_1|=1$.\\  
Denote the vertices in $D^1_2$ by $x_1$ and $x_2$, the vertices in $D^2_3$ by $x_3$ and $x_4$, the vertex in $D^3_4$ by $x_5$, and the vertex in $D^1_2$ by $y_1$. The vertex $y_1$ can not be adjacent with $x_3$ and $x_4$. The vertex $x_4$ can be adjacent with $x_1$ and $x_5$. The vertex $y_1$ can be adjacent with $x_1, x_2$ and $x_5$. The vertex $x_2$ can be adjacent with $x_3$ and $y_1$. The vertex $x_3$ can only be adjacent with $x_2$. The vertex $x_5$ can be adjacent with $x_4$ and $y_1$. In each case the edge $xx_1$ is not consistent.\\ 
 
\textbf{Subcase 7:} $|D^1_2|=3, |D^2_3|=1, |D^3_4|=1, |D^2_1|=1$.\\
Denote the vertices in $D^1_2$ by $x_1, x_2$ and $x_3$, the vertex in $D^2_3$ by $x_4$, the vertex in $D^3_4$ by $x_5$ and the vertex in $D^2_1$ by $y_1$. The vertex $y_1$ can not be adjacent with $x_4$. The vertices $x_2$ and ?$?x_3$ can be adjacent with $y_1$ and $x_4$. The vertex $y_1$ can be adjacent with $x_1, x_2, x_3$ and $x_5$. The vertex $x_4$ can be adjacent with $x_2$ and $x_3$. The vertex $x_5$ must be adjacent with $y_1$. In each case the edge $xx_1$ is not consistent.\\ 
If $d=5$ we split our proof into the following subcases.\\ 

\textbf{Subcase 1:} $|D^1_2|=|D^2_3|=|D^3_4|=1, |D^4_5|=2$ and $|D^2_1|=1.$\\  
Denote the vertex in $D^1_2, D^2_3$ and $D^3_4$ by $x_1, x_2$ and $x_3$ respectively, the vertices in $D^4_5$ by $x_4$ and $x_5$, and also the vertex in $D^2_1$ by $y_1$. The vertex $y_1$ can not be adjacent with $x_2, x_4$ and $x_5$. In this case the vertices $x_4$ and $x_5$ can not be adjacent with other vertices.\\

\textbf{Subcase 2:} $|D^1_2|=|D^2_3|=1, |D^3_4|=2, |D^4_5|=1$ and $|D^2_1|=1.$\\
Denote the vertex in $D^1_2, D^2_3$ by $x_1$ and $x_2$ respectively, the vertex in $D^3_4$ by $x_3$ and $x_4$, the vertex in $D^4_5$ by $x_5$ and also the vertex in $D^2_1$ by $y_1$. The vertices $y_1$ can not be adjacent with $x_2$ and $x_5$. The vertex $x_4$ must be adjacent with $x_5$. The vertex $y_1$ can be adjacent with $x_3$ and $x_4$. In each case the edge $x_1x_2$ is not consistent.\\   

\textbf{Subcase 3:} $|D^1_2|=1, |D^2_3|=2, |D^3_4|=1, |D^4_5|=1$ and $|D^2_1|=1.$\\  
Denote the vertex in $D^1_2$ by $x_1$, the vertices in $D^2_3$ by $x_2$ and $x_3$, the vertex in $D^3_4$ by $x_4$, the vertex in $D^4_5$ by $x_5$, and also the vertex in $D^2_1$ by $y_1$. The vertex $y_1$ can not be adjacent with $x_2, x_3$ and $x_5$. In this case the vertex $x_5$ can not be adjacent with other vertices.\\ 
If $d=6$, we have: $|D^1_2|=|D^2_3|=|D^3_4|=|D^4_5|=|D^5_6|=|D^2_1|=1.$\\
Denote the vertex in $D^1_2, D^2_3, D^3_4, D^4_5, D^5_6$ and $D^2_1$ by $x_1, x_2, x_3, x_4, x_5$ and $y_1$ respectively. The vertex $y_1$ can not be adjacent with $x_2$ and $x_4$, and also the vertex $y_1$ can be adjacent with $x_1, x_3$ and $x_5$. In each case the edge $x_1x_2$ is not consistent.   
\end{proof}

\bibliographystyle{amsplain}

\bigskip
\bigskip

{\footnotesize {\bf Amir Hosseini}\; \\ {Department of Mathematics, Islamic Azad University, Nazarabad Branch,
Nazarabad, Iran. }\\
{\tt Email: hosseini.sam.52@gmail.com }\\

{\footnotesize {\bf Mehdi Alaeiyan}\; \\ {Department of Mathematics, Iran University of Science and Technology, Narmak,
Tehran 16844. Iran. }\\
{\tt Email: alaeiyan@iust.ac.ir }\\

{\footnotesize {\bf Zohreh Aliannejadi}\; \\ {Department of Mathematics, Islamic Azad University, South Tehran Branch,
Tehran, Iran. }\\
{\tt Email: z\_alian@azad.ac.ir }\\

\end{document}